\documentclass[11pt]{amsart}%
\usepackage{enumerate}
\usepackage{amsfonts}
\usepackage{amsmath}
\usepackage{amssymb}
\usepackage{graphicx}%
\usepackage[final]{optional}
\usepackage[centertags]{amsmath}%
\usepackage{amsfonts,amssymb,amsthm, stmaryrd , graphicx, mathrsfs, mathpazo}%
\usepackage[usenames]{color}
\usepackage[toc,page]{appendix}

\setlength{\parindent}{10pt}%
\setlength{\textwidth}{140mm}%
\setlength{\textheight}{210mm}%
\setlength{\topmargin}{-8mm}%
\setlength{\unitlength}{1mm}%

\newtheorem{thm}{Theorem}[section]
\newcommand{\bt}{\begin{thm}}
\newcommand{\et}{\end{thm}}

\newtheorem{cor}[thm]{Corollary} 
\newcommand{\bc}{\begin{cor}}
\newcommand{\ec}{\end{cor}}
\newtheorem{lem}[thm]{Lemma}  
\newcommand{\bl}{\begin{lem}}
\newcommand{\el}{\end{lem}}
\newtheorem{prop}[thm]{Proposition}
\newcommand{\bp}{\begin{prop}}
\newcommand{\ep}{\end{prop}}
\newtheorem{defn}[thm]{Definition}
\newcommand{\bd}{\begin{defn}}  
\newcommand{\ed}{\end{defn}}

\newtheorem{rmrk}[thm]{Remark}
\newcommand{\br}{\begin{rmrk}}
\newcommand{\er}{\end{rmrk}}

\newcommand{\be}{\begin{equation}}
\newcommand{\ee}{\end{equation}}

\newcommand{\R}{\mathbb{R}}

\newcommand{\diam}{\operatorname{diam}}

\begin{document}
\title[Fundamental Groups in $RCD(0,N)$]{ Non-branching $RCD(0,N)$ Geodesic Spaces with Small Linear Diameter Growth have Finitely Generated Fundamental Groups}

\author{Yu Kitabeppu}
\address[Yu Kitabeppu]{Kyoto University}
\email{y.kitabeppu@gmail.com}

\author{Sajjad Lakzian}
\address[Sajjad Lakzian]{HCM, Universit\"{a}t Bonn}
\email{lakzians@gmail.com}

\begin{abstract}
In this paper, we generalize the finite generation result of Sormani~\cite{Sormani-fund-group} to non-branching $RCD(0,N)$ geodesic spaces (and in particular, Alexandrov spaces) with full support measures. This is a special case of the Milnor's Conjecture for complete non-compact $RCD(0,N)$ spaces. One of the key tools we use is the Abresch-Gromoll type excess estimates for non-smooth spaces obtained by Gigli-Mosconii in~\cite{Gigli-Mosconi-Excess}.
\end{abstract}

\maketitle






\section{Introduction}

In~\cite{MR0232311}, Milnor conjectures that a complete non-compact Riemannian manifold, $M^n$, with non-negative Ricci curvature possesses a finitely generated fundamental group. The finite generation of fundamental group has been proven in the following cases:

\textbf{(i)} If $M$ has non-negative sectional curvature (Cheeger-Gromoll~\cite{ChGrNonnegCurvature});

\textbf{(ii)} When $M$ is three dimensional and $Ric > 0$ (Schoen-Yau~\cite{Schoen-Yau-Nonneg});

\textbf{(iii)} When $M$ has Euclidean volume growth (Anderson~\cite{MR1046624} and Li~\cite{MR847950});

\textbf{(iv)} $M^n$ has small diameter growth ($O(r^{\frac{1}{n}})$) and sectional curvature bounded below (Abresch-Gromoll~\cite{AbreschGromoll}).

Notice that when $n = 1,2$, the result follows from \textbf{(i)} since $Ric \ge 0$ is the same as non-negative sectional curvature.

As far as finite generation results in non-smooth spaces satisfying curvature-dimension bounds, Bacher-Sturm in~\cite{BacherSturm} prove the finite generation of the fundamental group for $CD(K,N)$ spaces with $K > 0$. This is a result of the Myer's Theorem in the non-smooth setting.

Sormani in~\cite{Sormani-fund-group} proves that a Riemannian manifold $M^n$ ($n \ge 3$) with $Ric \ge 0$ has a finitely generated fundamental group if it has small linear diameter growth,
\be
	\limsup \frac{\diam \partial \left( B (p , r) \right)}{r} <4 S_n,
\ee
where, the universal constant $S_n$ (coming from Abresch-Gromoll's excess estimate) is 
\be
	S_n := \left(4\cdot 3^n\left(\frac{n-1}{n-2}\right)^{n-1}\frac{n-1}{n}\right)^{-1}.
\ee

This result was later extended to smooth metric measure spaces with non-negative Bakery-Emery Ricci curvature (see Wei~\cite{WeiWylie}).

Following the recent progress in the study of metric-measure spaces having curvature bounded from below in the sense of Lott-Sturm-Villani that are also infinitesimally Hilbertian (having linear Laplacian~\cite{Ambrosio-Gigli-Savare}), come many tools that were previously only available in the Riemannian setting. Among these tools, are the splitting theorem and Abresch-Gromoll excess estimates~\cite{Gigli-Mosconi-Excess} and~\cite{Gigli-Splitting}, to name a few.

Our purpose in this paper is to extend the above mentioned result of Sormani~\cite{Sormani-fund-group} to the spaces satisfying the curvature-dimension condition $CD(0,N)$ that are also infinitesimally Hilbertian (in short, $RCD(0,N)$ spaces). In the course of the proof, it will become clear that we need to assume some other metric conditions on the space but the general approach is reminiscent of that of \cite{Sormani-fund-group}. The main theorem of this paper is  the following:
\bt\label{thm-main}
   Let $\left( X , d_X , m \right)$ be a connected, locally contractible, and non-branching geodesic metric-measure space with $\operatorname{supp}(m) = X$. Suppose $X$ satisfies the $CD(0,N)$ curvature-dimension conditions that is also infinitesimally Hilbertian (see~\cite{Ambrosio-Gigli-Savare} for the detailed definition). If $X$ has small linear diameter growth
   \be
   		\limsup \frac{\diam \partial \left( B (p , r) \right)}{r} <4 S_N,
   \ee
where,
\be
 S_N = 
  \begin{cases}
  \left(9\frac{N-1}{2-N}+4\right)^{-1}&\text{ if $1<N<2$},\\
  \frac{1}{13}&\text{ if $N=2$},\\
  \left(4+2\cdot 3^N\left(\frac{N-1}{N-2}\right)^{N-1}\frac{N-1}{N}\right)^{-1}&\text{ if $N>2$}; 
 \end{cases}
\ee 
 Then,
 $X$ has finitely generated fundamental group.  
\et

Since, the fundamental group and the diameter growth in nature are independent from the measure on the space, we can rephrase our main theorem in the following different but more enlightening manner:
\bc\label{cor-main1}
 	Let $\left( X , d_X \right)$ be a connected, locally contactible and non-branching geodesic metric space with small linear diameter growth. If one can find a Borel measure $m$ on $X$ with $\operatorname{supp}(m) = X$ and for which $\left( X , d_X , m \right)$ becomes an $RCD(0,N)$ space, then, $\pi_1(X)$ is finitely generated.
\ec
\br
	One might be interested to use the Corollary~\ref{cor-main1} to produce many non-branching examples (and not necessarily coming from Finsler manifolds) of metric spaces that do not posses any Borel measures with full support that would make the space an $RCD(0,N)$ space.
\er

When the underlying space is non-negatively curved in the Alexandrov sense, our result simplifies to:
\bc\label{cor-main2}
Let $X$ be a metric space with non-negative curvature in the sense of Alexandrov, then, $X$ has finitely generated fundamental group if $X$ has small diameter growth.
\ec

\br
 Perelman~\cite{PerelmanAlexII} proved that any non-compact non-negatively curved Alexandrov space $X$ has a closed 
 totally convex subset $S$, which is a deformation retraction of $X$. 
 Thus, the fundamental group of $X$ is isomorphic to the one of $S$, accordingly it is finitely generated. 
 Corollary \ref{cor-main2} gives a different proof for Alexandrov spaces with small diameter growth. 
\er

This paper is organized as follows: Section \ref{sec-RCD-Excess} is devoted to a brief review of excess estimates in non-smooth setting; In Section \ref{sec-Univ-Cover}, we will discuss universal coverings of $RCD(K,N)$ spaces and their properties; In Section \ref{sec-halfway-cut-lemmas}, we generalize the half way and uniform cut lemmas to non-smooth spaces and the proofs of Theorem \ref{thm-main} and Corollary \ref{cor-main2} are presented in Section \ref{sec-proof}.






\section*{Acknowledgements}

The authors would like to thank professor Karl Theodor Sturm and the stochastic analysis group at the University of Bonn.

Yu Kitabeppu is partly supported by the Grant-in-Aid for JSPS Fellows, The Ministry of Education, Culture, Sports, Science and Technology, Japan, and also partly supported by the grants of Strategic Young Researcher Overseas Visits Program for Accelerating Brain Circulation during his stay in Bonn. Sajjad Lakzian is supported by the postdoctoral fellowship at the Hausdorff Institute for Mathematics, University of Bonn. Sajjad Lakzian would also like to thank professor Christina Sormani for teaching the authors about this result.





\section{$RCD(0,N)$ Spaces and Excess Estimates}\label{sec-RCD-Excess}




\subsection{Abresch-Gromoll Excess Estimates}
Let $M$ be a complete Riemannian manifold. 
Take two distinct points $y_1 , y_2 \in M$ and fix them,  then for any $x \in M$, the excess , $e(x)$ is
\be
	e(x) := d(x , y_1) + d(x , y_2) - d(y_1 , y_2).
\ee
It is straightforward that $e(x)$ is a Lipschitz function with Lipschitz constant $2$.

Now suppose $\gamma$ is a minimal geodesic connecting $y_1$ and $y_2$ and define the leg and height functions $l(x)$ and $h(x)$ (resp.) as 
\be
	l(x) := \min \left\{ d(x,y_1) , d(x , y_2) \right\} \;\;\; and \;\;\; h(x) := \min_t d(x , \gamma(t)).
\ee
The triangle inequality implies $e(x) \le 2 h(x)$. 

The significance of Abresch-Gromll excess estimate is that they give a non-trivial upper bound for the excess that has the right asymptotic behavior. 

Abresch-Gromoll~\cite{AbreschGromoll} prove that when $Ric \ge 0$ and when $h(x) \le \frac{l(x)}{2}$, then
\be
	e(x) \le 4 \left( \frac{h^n(x)}{l(x)} \right)^{n-1}.
\ee




\subsection{RCD(0,N) Spaces}\label{subsec-RCD}

Sturm in~\cite{Sturm-06} and~\cite{Sturm-2006-II} and Lott-Villani~\cite{Lott-Villani-09} independently developed a notion of a metric measure space having Ricci curvature being bounded from below by $K \in \R$ and dimension bounded above by $N \in [0 , \infty]$. The conditions that these spaces must satisfy are called the curvature-dimension conditions and these spaces are called to be of class $CD(K,N)$.

These curvature-dimension bounds actually generalize the smooth Ricci curvature bounds for Riemannian manifolds. Another nice property of the $CD(K,N)$ classes is their closedness under measured Gromov-Hausdorff convergence (c.f.~\cite{Lott-Villani-09}).

In order to get a "local-to-global" property on top of the aforementioned properties, Bacher-Sturm in~\cite{BacherSturm} introduce a variation of the curvature-dimension conditions which is called the reduced curvature-dimension condition i.e. $CD^*(K,N)$. 

The $CD^*(K,N)$ condition while being a local condition can be realized by some non-linear Finsler structures. It is well-known (see Cheeger-Colding~\cite{ChCo-PartI}~\cite{ChCo-PartII}) that non-linear Finsler structures do not arise as limits of Riemannian structures with Ricci curvature bounded below and they can exhibit undesirable behaviors that do not match with our expectations of a space with curvature bounded below.

To exclude these non-linear anomalies, Ambrosio-Gigli-Savare in \cite{Ambrosio-Gigli-Savare} define the notion of a space being "infinitesimally Hilbertian" spaces. To wit, "infinitesimally Hilbertian" means that the space enjoys a linear Laplacian or equivalently the Sobolev space $W^{1,2} (X , d_X , m)$ is Hilbert. An $RCD(K,N)$ space is a $CD(K,N)$ space which is also infinitesimally Hilbertian. 

$RCD(K,N)$ condition is again stable under measured Gromov-Hausdorff convergence and is also compatible with the smooth Riemannian setting. Infinitesimally Hilbertian spaces also benefit from a very key property namely
\be
	Infinitesimally\; Hilbertian + CD^*(0,N) = Infinitesimally \;Hilbertian + CD(0,N).
\ee
See~\cite{BacherSturm} for a proof.




\subsection{Excess Estimates for RCD(K,N) Spaces}\label{subsec-excess-nonsmooth}

Gigli-Mosoni in~\cite{Gigli-Mosconi-Excess} proves Abresch-Gromoll type excess estimates for $RCD(0,N)$ spaces. They also generalize Cheeger-Colding's excess estimates that appeared in~\cite{ChCo-almost-rigidity}. For the sake of clarity, we will outline Gigli-Mosconi's result in below.

Let $\left(X , d_X , m  \right)$ be an $RCD(K,N)$ space for some $K \le 0$ and for $1 < N < \infty$. Let $\bar{x} \in \operatorname{supp}(m)$ be a point in the support of the background measure. Furthermore assume that the leg and height functions satisfy
\be
	h(\bar{x}) < l(\bar{x}),
\ee
then,
\be\label{eq-excess}
e(\bar x)\leq 
\begin{cases}
2\frac{N-1}{N-2}\left(D_{K, N}(\bar x)h^{N}(\bar x)\right)^{\frac{1}{N-1}}&\text{if $N>2$},\\
\frac{N}{2-N}D_{K, N}(\bar x)h^{2}(\bar x)&\text{if $1<N<2$},\\
D_{K, N}(\bar x)h^{2}(\bar x)\left(\frac{1}{1+\sqrt{1+D^{2}(\bar x)h^{2}(\bar x)}}+\log\frac{1+\sqrt{1+D^{2}(\bar x)h^{2}(\bar x)}}{D_{K, N}(\bar x) h(\bar x)}\right)&\text{if $N=2$},
\end{cases}
\ee
where, 
\be
D_{K, N}(\bar x)=\left(\frac{s_{K, N}(h(\bar x))}{h(\bar x)}\right)^{N-1}\frac{c_{K, N}(l(\bar x)-h(\bar x))}{N};
\ee 
\be
s_{K,N}(\theta)=
\begin{cases}
\sqrt{\frac{N-1}{K}}\sin\left(\theta\sqrt{\frac{K}{N-1}}\right)&\text{ if $K>0$},\\
\theta&\text{ if $K=0$},\\
\sqrt{\frac{N-1}{-K}}\sinh\left(\theta\sqrt{\frac{-K}{N-1}}\right)&\text{ if $K<0$};
\end{cases}
\ee
and
\be
c_{K, N}(\theta)=
\begin{cases}
\dfrac{N-1}{\theta}, &\text{if $K=0$},\\
&\\
\sqrt{-K(N-1)}{\rm cotanh}\left(\theta\sqrt{\frac{-K}{N-1}}\right), &\text{if $K<0$}.
\end{cases}
\ee
When $K=0$, these estimates simplify to 
\be
 e(\bar x)\leq 
 \begin{cases}
\frac{N-1}{2-N}\frac{h^2(\bar x)}{l(\bar x)-h(\bar x)}&\text{ if $1<N<2$},\\
\frac{1}{2}\frac{h^2(\bar x)}{l(\bar x)-h(\bar x)}
\left(\frac{1}{1+\sqrt{1+\left(\frac{1}{2}\frac{h^2(\bar x)}{l(\bar x)-h(\bar x)}\right)^2}}+\mathrm{log}
\frac{1+\sqrt{1+\left(\frac{1}{2}\frac{h^2(\bar x)}{l(\bar x)-h(\bar x)}\right)^2}}{\frac{1}{2}\frac{h^2(\bar x)}{l(\bar x)-h(\bar x)}}\right)&\text{ if $N=2$},\\
2\frac{N-1}{N-2}\left(\frac{N-1}{N}\frac{h^N(\bar x)}{l(\bar x)-h(\bar x)}\right)^{\frac{1}{N-1}}&\text{ if $N>2$}.
\end{cases}
\ee






\section{Universal Covers of $RCD(0,N)$ Spaces}\label{sec-Univ-Cover}

In this section, we will discuss the properties and natural metric measure structure of the universal cover of an $RCD(0,N)$ space. 

Let $X$ be a topological space then, a covering $P : \tilde{X} \rightarrow X$ is called the universal cover if $\tilde{X}$ is simply connected. It is well known that any other covering of $X$ can itself be covered by the universal cover. 

For existence of the universal cover we only need to require very mild topological conditions . In fact, if $X$ is connected, locally pathwise connected and 	semi-locally simply connected, then a universal cover of $X$ exists (see~\cite{BacherSturm} for details).

In this paper we will need to be able to apply the excess estimates~(\ref{eq-excess}) (see also ~\cite{Gigli-Mosconi-Excess}) to a universal covering of an $RCD(0,N)$ metric measure space $X$. Hence, we will need a canonical metric measure structure on a universal covering of a metric measure space $X$.

Let $\left( X , d_X , m \right)$ be a metric measure space and let $P: \tilde{X} \rightarrow X$ be a universal covering.




\subsection*{Canonical Metric, $\tilde{d}_{\tilde{X}}$, on $\tilde{X}$} A curve $\tilde{\gamma}$ in $\tilde{X}$ is called admissible whenever $ \gamma := P \circ \tilde{\gamma}$ is a continuous curve in $X$. For a pair of points $\tilde{x} , \tilde{y} \in \tilde{X}$, the metric $\tilde{d}_{\tilde{X}}$ is defined as
\be
	\tilde{d}_{\tilde{X}} \left( \tilde{x} , \tilde{y} \right) := \inf \left\{ Length\left( \tilde{\gamma} \right) \; | \; \tilde{\gamma} \; \text{is admissible and connects} \; \tilde{x} \; \text{to} \; \tilde{y} \right\}.
\ee

Notice that $Length\left( \tilde{\gamma} \right)$ is computed using the length structure of the base space , $X$ and the fact that $\tilde{X}$ is locally homeomorphic to $X$.
The covering map $P: (\tilde{X},\tilde{d}_{\tilde{X}})\rightarrow (X,d)$ 
becomes a local isometry and 1-Lipschitz map.




\subsection*{Canonical Measure, $\tilde{m}$, on $\tilde{X}$}
Again using the properties of a covering map, one can canonically obtain a measure, $\tilde{m}$, on the covering space, $\tilde{X}$. Let $\tilde{A} \subset \tilde{X}$ be any subset such that the restriction of the covering map $P$ to $\tilde{A}$ is an isometry to $P(\tilde{A})$. Define $\tilde{m}(\tilde{A}) := m(P(\tilde{A}))$ and then extend this measure to the $\sigma-$algebra generated by all such sets, which in turn is equal to the Borel $\sigma-$algebra of $\tilde{X}$ (for details see~\cite{BacherSturm}).  

The measure $\tilde{m}$ can also be defined in the following equivalent manner
\be
	\tilde{m} (\tilde{U}) := \sup \left\{\sum m (P(\tilde{A}_j))  \; | \; \tilde{U} = \sqcup \tilde{A}_j \right\}.
\ee

\begin{thm} 
$\left(\tilde{X} , \tilde{d}_{\tilde{X}} , \tilde{m}   \right)$ is an $RCD(0,N)$ space whenever $\left(  X , d_X , m \right)$ is an $RCD(0,N)$ space.
\end{thm}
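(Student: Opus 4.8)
The plan is to verify separately the two ingredients that constitute the $RCD(0,N)$ condition for $\left(\tilde{X}, \tilde{d}_{\tilde{X}}, \tilde{m}\right)$: infinitesimal Hilbertianity and the curvature--dimension bound. The single structural fact driving everything is that, by construction, the covering map $P$ is a local isometry of metric measure spaces; that is, every $\tilde{x} \in \tilde{X}$ has an open neighborhood $\tilde{U}$ that $P$ maps isometrically onto an open set $U = P(\tilde{U}) \subset X$ with $\tilde{m}(\tilde{U}) = m(U)$. Thus $\tilde{X}$ is locally indistinguishable from $X$ as a metric measure space. First I would record the soft facts needed globally: since $X$ is geodesic and $P$ has the path-lifting property, $\tilde{X}$ is again a geodesic space (minimizing geodesics lift and project locally), $\tilde{m}$ is a Borel measure with $\operatorname{supp}(\tilde{m}) = \tilde{X}$, and $\tilde{m}$ is locally finite. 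These place $\tilde{X}$ in the category where both conditions below make sense.

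For infinitesimal Hilbertianity I would exploit that this is essentially a local, measure-theoretic property of the Cheeger energy. The minimal weak upper gradient enjoys the locality property, and for functions supported in an evenly covered set $\tilde{U}$ it coincides, via the isometry $P|_{\tilde{U}}$, with the corresponding object downstairs on $U$. Consequently the pointwise parallelogram identity $|\nabla(f+g)|^2 + |\nabla(f-g)|^2 = 2|\nabla f|^2 + 2|\nabla g|^2$, which holds $m$-a.e.\ on $X$ precisely because $X$ is infinitesimally Hilbertian, transfers $\tilde{m}$-a.e.\ to $\tilde{X}$ after patching over a countable cover of $\tilde{X}$ by evenly covered sets. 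Hence the Cheeger energy of $\tilde{X}$ is a quadratic form and $W^{1,2}(\tilde{X}, \tilde{d}_{\tilde{X}}, \tilde{m})$ is a Hilbert space.

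For the curvature--dimension bound I would use the reduced condition together with its local-to-global character. Because $X$ is $RCD(0,N)$ it is in particular $CD^*(0,N)$, and $CD^*(0,N)$ is a local condition in the sense of Bacher--Sturm~\cite{BacherSturm}. Since each point of $\tilde{X}$ has a neighborhood isometric to one in $X$, the space $\tilde{X}$ satisfies $CD^*(0,N)$ locally. Applying the local-to-global theorem for the reduced curvature--dimension condition then upgrades this to the global statement that $\tilde{X}$ is $CD^*(0,N)$. Combining with the infinitesimal Hilbertianity established above and invoking the equivalence quoted in Subsection~\ref{subsec-RCD},
\be
 \text{Inf. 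Hilb.} + CD^*(0,N) = \text{Inf. Hilb.} + CD(0,N),
\ee
we conclude that $\tilde{X}$ is an $RCD(0,N)$ space.

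The main obstacle is the local-to-global step. The classical globalization theorem for $CD^*(K,N)$ of Bacher--Sturm is stated under a non-branching hypothesis, so to run the argument in full generality I would first check that non-branching lifts from $X$ to $\tilde{X}$ --- a branching configuration of geodesics in $\tilde{X}$ would, under the local isometry $P$, project to a branching configuration in a small ball of $X$, contradicting non-branching downstairs --- and that the regularity demands on the measure (local finiteness, full support) needed for globalization are met by $\tilde{m}$. Alternatively, one may bypass non-branching by using that the $RCD^*(K,N)$ condition itself globalizes once infinitesimal Hilbertianity is in force. In either route the delicate point is controlling optimal transport between measures whose supports are separated by nontrivial topology of $\tilde{X}$, where Wasserstein geodesics on the cover must be related, via lifting, to transport on the base; verifying that this localization of optimal plans is legitimate is where the real content lies.
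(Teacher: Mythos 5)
Your proposal follows essentially the same route as the paper's own proof: pass to $RCD^*(0,N)$, observe that infinitesimal Hilbertianity and the $CD^*(0,N)$ condition are local properties which transfer through the local isometry $P$ and globalize on $\tilde{X}$, and then invoke the equivalence of $CD^*(0,N)$ and $CD(0,N)$ under infinitesimal Hilbertianity. If anything, you are more careful than the paper, which simply asserts that ``both properties are the local property''; your explicit check that non-branching lifts to the cover (needed for the Bacher--Sturm globalization of $CD^*$), and your fallback via globalization of $RCD^*$ itself, fill in exactly the points the paper leaves implicit.
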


\begin{proof}
 Since $(X,d,m)$ is an $RCD(0,N)$ space, it is also an $RCD^*(0,N)$ space, namely it is infinitesimally Hilbertian and a $CD^*(0,N)$ space. 
 Both properties are the local property. Hence, by the construction of $\tilde{d}_{\tilde{X}}$ and $\tilde{m}$, $(\tilde{X},\tilde{d}_{\tilde{X}},\tilde{m})$ is also an $RCD^*(0,N)$ space. 
 However the conditions $CD^*(0,N)$ and $CD(0,N)$ are equivalent to each other. Then $(\tilde{X},\tilde{d}_{\tilde{X}},\tilde{m})$ is an $RCD(0,N)$ space. 
\end{proof}






\section{Half Way Lemma and Uniform Cut Lemma in Non-Smooth Setting}\label{sec-halfway-cut-lemmas}




\subsection{Half Way Lemma}
To apply the Half way Lemma in our setting, we need one more assumption on a metric space $X$, 
namely locally contractibility, which guarantees the locally semi-simply connectedness of $X$. 
Accordingly the existence of the universal cover is guaranteed. 
\bl[Halfway Lemma]\label{lem-Halfway}
 Let $(X,d)$ be a connected and geodesic metric space. 
 Assume furthermore $X$ is locally contractible and proper. Then there exist an ordered set of independent generators $\{ g_1 , g_2 , \dots  \}$ with minimal representative geodesic loops $\gamma_k$ with $ Length \left( \gamma_k \right) = d_k$ such that
 \be\label{halfwayestimate}
 	d_X \left( \gamma_k(0) , \gamma_k \left( \frac{d_k}{2} \right)  \right) = \frac{d_k}{2},
 \ee
 and if $\pi_1(X , x_0)$ is infinitely generated, one obtains a sequence of such generators.
\el
\begin{proof}
 First we note that $\tilde{X}$ is proper if and only if so is $X$. Now fix $x_0\in X$ and let $\tilde{x}_0\in\tilde{X}$ be a lift of $x_0$ to $\tilde{X}$. Obviously, for any non-trivial element $g\in G=\pi_1(X,x_0)$ one has $\tilde{d}_{\tilde{X}}(\tilde{x}_0,g\tilde{x}_0)>0$. Furthermore, the locally contractibility and the properness of $X$ guarantee the positivity of a minimal value of $\tilde{d}_{\tilde{X}}(\tilde{x}_0,g\tilde{x}_0)$.

Since $G$ is discrete, there exists an element $g_1\in G$ attaining the minimum. Now we can proceed by induction just as in~\cite{Sormani-fund-group}. 
And we also obtain (\ref{halfwayestimate}) as in~\cite{Sormani-fund-group}.
\end{proof}
 
\br
 An $RCD(0,N)$ space $X$ is proper. However it is not known to authors whether an $RCD(0,N)$ is also automatically locally contractible or not.
\er




\subsection{Uniform Cut Lemma}
To generalize the uniform cut lemma of~\cite{Sormani-fund-group} to our setting, we need some non-branching assumptions on $RCD(0,N)$ space $X$. Moreover we need modify the value of $S_N$. 

Define the universal constant $S_N$ by 
\be\label{eq-universal-constant}
 S_N = 
 \begin{cases}
  \left(9\frac{N-1}{2-N}+4\right)^{-1}&\text{ if $1<N<2$},\\
  \frac{1}{13}&\text{ if $N=2$},\\
  \left(4+2\cdot 3^N\left(\frac{N-1}{N-2}\right)^{N-1}\frac{N-1}{N}\right)^{-1}&\text{ if $N>2$}. 
 \end{cases}
\ee 

We define the set of cut locus for $x \in X$, roughly-speaking, as the set of points at which geodesics emanating from $x$ stop being minimizing.
\bd[Cut Locus] 
 Let $(X,d)$ be a complete geodesic metric space. 
 For a given point $x\in X$, we define the set of cut locus at $x$ by 
 \be
  C_x:=\left\{ y\in X\, ;\, ^{\nexists}z\in X \,\mathrm{s.t.}\, d(x,z)=d(x,y)+d(y,z)\right\}.
 \ee
 An element in $C_x$ is called a cut point.
 For Riemannian manifolds, they coincide with the ordinal cut locus.
\ed
\br
Since our space is a geodesic space, it is straightforward to see that the notion of cut locus as defined here coincides with the minimal cut locus as defined in~\cite{ShkSor1}. The interested reader should consult~\cite{ShkSor1} for a thorough discussion about conjugate and cut points in length spaces.
\er
\bl[Uniform Cut Lemma for $N \neq 2$]\label{lem-cut}
 Let $(X,d,m)$ be a non-branching $RCD(0,N)$ space with $N\neq 2$. Let $\gamma$ be a non-contractible geodesic loop based at a point $x_0 \in X$ with $Length (\gamma) = L$. Suppose the following are true
 
 \textbf{(a)} $\gamma$ has the shortest length among all loops homotopic to $\gamma$.
 
 \textbf{(b)} $\gamma$ is minimal on both intervals $\left[ 0 , \frac{L}{2} \right]$ and $\left[ \frac{L}{2} , L \right]$.
 
 Then, for any $x \in \partial B(x_0 , RL)$ with $L \ge \frac{1}{2} + S_N$, one has
\be
	d_X \left( x , \gamma \left( \frac{L}{2} \right)  \right) \ge \left(  R - \frac{1}{2} \right)L  + 2 S_N L.
\ee 
 where, $S_N$ is the universal constant defined by (\ref{eq-universal-constant}).
\el
For the sake of completeness, we outline the proof (which is similar to the one in~\cite{Sormani-fund-group}) in below.
\begin{proof}
 Throughout the proof we have $N \neq 2$. We first observe that for a geodesic loop $\gamma : [0,L] \rightarrow X$ based at a point $x_0$, There does not exist geodesic from $x_0$ through $\gamma \left( \frac{L}{2} \right)$ such that it is still minimal after passing through $\gamma\left( \frac{L}{2} \right)$. On the contrary, suppose that there exists such geodesic $\eta : [0,L/2+\epsilon]\rightarrow X$. Both curves $\eta \left( \frac{L}{2}\rightarrow \frac{L}{2}+\epsilon \right) \circ \gamma \left( 0\rightarrow \frac{L}{2}  \right)$ and $\eta \left( \frac{L}{2}\rightarrow \frac{L}{2}+\epsilon \right) \circ \gamma \left( L \rightarrow \frac{L}{2} \right)$ are minimal geodesics and this contradicts the non-branching property of $X$. 
The above claim means that $\gamma \left( \frac{L}{2} \right)\in C_{x_0}$ and $d(x,\gamma(L/2))>L/2+RL$ for any $x\in\partial B(x_0,RL)$, $R>1/2$.  
 
For $R_0 = \frac{1}{2} + L$, we will examine the proof of uniform cut lemma in our setting. 
Suppose there exists a point $x \in \partial B(x_0 , R_0L)$ with 
\be\label{eq-h-estimate}
d_X \left(  x , \gamma \left( \frac{L}{2} \right) \right) =: A < 3S_N L.
\ee
Let $\beta: [0 , A] \rightarrow X$ be a minimal geodesic from $\gamma \left( \frac{L}{2} \right)$ to $x$.
  Consider the triangle in $\tilde{X}$ with vertices $\tilde{x_0} , g \tilde{x_0}$ and $\tilde{x}$ and with geodesic legs given by the lifts $\tilde{\gamma}$ from $\tilde{x_0}$ to $g \tilde{x_0}$ and $\tilde{\beta}(0 \to A) \circ \tilde{\gamma} \left( 0 \to \frac{L}{2} \right)$ from $\tilde{x_0}$ to $\tilde{x}$ . Let 
 \be
\tilde{ l_0} := \tilde{d}_{\tilde{X}} \left( \tilde{x} , \tilde{x_0} \right) \ge d_X (x , x_0) = R_0 L,
\ee 
 and
\be 
	\tilde{l_1 }:= \tilde{d}_{\tilde{X}} \left( \tilde{x}  , g\tilde{x_0}\right) \ge d_X (x , x_0) = R_0 L.
\ee 
 
 Now on one hand, the excess at $\tilde{x}$ satisfies
\be
	e(\tilde{x}) := \tilde{ l_0} + \tilde{l_1} - \tilde{d}_{\tilde{X}} \left( \tilde{x_0} , g \tilde{x_0} \right) \ge 2R_0L - L = 2S_N L,
\ee
 so we can apply the non-smooth excess estimates. And on the other hand, since $S_N < \frac{1}{8}$ one observes that
 \be
 	l(\tilde{x}) - h(\tilde{x}) \ge \left(\frac{1}{2}+S_N\right)L-3S_NL=L\left(\frac{1}{2}-2S_N\right)> \frac{L}{4}. 
 \ee

Now, applying the Abresch-Gromoll type excess estimates for $RCD(0,N)$ spaces (see \cite{Gigli-Mosconi-Excess}) yields\\ 
 
\be
    2S_NL\le e(\tilde{x}) < 
\begin{cases}
 \frac{N-1}{2-N}\frac{(3S_NL)^2}{L\left(\frac{1}{2}-2S_N\right)}&\text{ if $1<N<2$},\\
 2\frac{N-1}{N-2}\left(\frac{N-1}{N}\frac{(3S_NL)^N}{L\left(\frac{1}{2}-2S_N\right)}\right)^{\frac{1}{N-1}}&\text{ if $N>2$}.
\end{cases}
.
\ee
\\

The above inequalities simplify to 
\be
\begin{cases}
 S_N>\left(9\frac{N-1}{2-N}+4\right)^{-1}&\text{ if $1<N<2$},\\
 S_N>\left(2\cdot3^N\frac{N-1}{N}\left(\frac{N-1}{N-2}\right)^{N-1}+4\right)^{-1}&\text{ if $N>2$}.
\end{cases}
\ee

Both inequalities contradict the definition of $S_N$.

 For $R \ge R_0$ , take $y \in \partial B (x_0 , R_0L) \cap \gamma \left( 0 \rightarrow \frac{L}{2} \right)$ and compute
\begin{eqnarray}
d_X \left( x , \gamma \left( \frac{L}{2} \right) \right) &=& d_X \left( x , y \right) + d_X \left( y ,  \gamma \left(\frac{L}{2} \right)\right) \\ & \ge&  \left( RL - R_0L  \right) + 3S_N L = \left(R - \frac{1}{2}  \right)L + 2S_N L.
\end{eqnarray}

\end{proof}

\bl[Uniform Cut Lemma for $N=2$]\label{lem-cut-N2}
  Let $(X,d,m)$ be a non-branching $RCD(0,2)$ space. Let $\gamma$ be a non-contractible geodesic loop based at a point $x_0 \in X$ with $Length (\gamma) = L\ge 1352$. 
  Suppose the following are true
 
 \textbf{(a)} $\gamma$ has the shortest length among all loops homotopic to $\gamma$.
 
 \textbf{(b)} $\gamma$ is minimal on both intervals $\left[ 0 , \frac{L}{2} \right]$ and $\left[ \frac{L}{2} , L \right]$.
 
 Then, for any $x \in \partial B(x_0 , RL)$ with $R \ge \frac{1}{2} + S_2$, one has
\be
	d_X \left( x , \gamma \left( \frac{L}{2} \right)  \right) \ge \left(  R - \frac{1}{2} \right)L  + 2 S_2 L,
\ee 
where, $S_2$ is the universal constant defined in (\ref{eq-universal-constant}).
\el
\begin{proof}
 In the same way as in the proof of Lemma \ref{lem-cut}, we have 
\begin{align}
 2S_2L
&\le e(\tilde x)\notag\\
&\le \frac{1}{2}\frac{h^2(\tilde x)}{l(\tilde x)-h(\tilde x)}
\left(\frac{1}{1+\sqrt{1+\left(\frac{1}{2}\frac{h^2(\tilde x)}{l(\tilde x)-h(\tilde x)}\right)^2}}+\mathrm{log}
\frac{1+\sqrt{1+\left(\frac{1}{2}\frac{h^2(\tilde x)}{l(\tilde x)-h(\tilde x)}\right)^2}}{\frac{1}{2}\frac{h^2(\tilde x)}{l(\tilde x)-h(\tilde x)}}\right)\notag\\
&\le \frac{1}{2}\frac{h^2(\tilde x)}{l(\tilde x)-h(\tilde x)}\left(\frac{1}{2}+\frac{2+\frac{1}{2}\frac{h^2(\tilde x)}{l(\tilde x)-h(\tilde x)}}{\frac{1}{2}\frac{h^2(\tilde x)}{l(\tilde x)-h(\tilde x)}}\right)\label{ex-estimate2}\\
&=\frac{1}{2}\frac{h^2(\tilde x)}{l(\tilde x)-h(\tilde x)}\left(\frac{3}{2}+4\frac{l(\tilde x)-h(\tilde x)}{h^2(\tilde x)}\right)\label{ex-estimate2}. \notag
\end{align}
Let $t_0\in [0,1]$ be a point satisfying $h(\tilde x)=\tilde{d}(\tilde{x},\tilde{\gamma}(t_0L))$. 
Without loss of generality, we may assume that $t_0\in [0,1/2]$. Then one can bound $h(\tilde x)$ from below as
\be
h(\tilde x)\ge d(x,\gamma(t_0L))\ge d(x,\gamma(0))-d(\gamma(0),\gamma(t_0L))\ge S_2L;
\ee
On the other hand, we have 
\be
l(\tilde{x})-h(\tilde{x})\le \tilde{d}(\tilde{x},g\tilde{x_0})-\tilde{d}(\tilde{x},\tilde{\gamma}(t_0L))
\le \tilde{d}(\tilde{\gamma}(L),\tilde{\gamma}(t_0L))=(1-t_0)L,
\ee
therefore,
\be
 \frac{l(\tilde{x})-h(\tilde{x})}{h^2(\tilde{x})}\le \frac{(1-t_0)L}{S_2^2L^2}\le \frac{1}{S_2^2L}\le \frac{1}{\frac{1}{169}\cdot 1352}=\frac{1}{8}.\label{appestimate2}
\ee
Combining the two inequalities (\ref{ex-estimate2}) and (\ref{appestimate2}), one obtains
\begin{align}
 2S_2L
&\le \frac{1}{2}\frac{h^2(\tilde x)}{l(\tilde x)-h(\tilde x)}\left(\frac{3}{2}+4\frac{l(\tilde x)-h(\tilde x)}{h^2(\tilde x)}\right)\notag\\
&<\frac{1}{2}\frac{(3S_2L)^2}{L\left(\frac{1}{2}-2S_2\right)}\left(\frac{3}{2}+4\cdot\frac{1}{8}\right)\notag\\
&=2\frac{9S_2^2L}{1-4S_2},
\end{align}
or
\be
 1<\frac{9S_2}{1-4S_2}.
\ee
This is a contradiction to the definition of $S_2$. 
\end{proof}

\br
As we see in Lemma \ref{lem-cut-N2}, when $N = 2$, the uniform cut lemma holds only for loops whose lengths are sufficiently large ($ L > 1352$), but this is enough for us since in the proof of Theorem \ref{thm-main}, we have a sequence of loops, the lengths of which are diverging to $\infty$.
\er

\br
	It is known that an $RCD(0,N)$ spaces are strongly $CD(0,N)$ which implies that they are essentially non-branching (see ~\cite{Rajala-Sturm} for details) but this is not strong enough to get a topological result as in these notes.
\er






\section{Small Diameter Theorem}\label{sec-proof}
In this section we prove Theorem \ref{thm-main} and Corollary \ref{cor-main2}

\subsection*{Proof of Theorem \ref{thm-main}}

Now that we have all the essential ingredients (Half Way and Uniform Cut Lemmas), the proof the main theorem essentially goes verbatim as in the proof of the small diameter growth theorem in~\cite{Sormani-fund-group}. For the sake of completeness, we will repeat the proof in below:

Suppose, $\pi_1(X , x_0)$ is infinitely generated. Construct the ordered set of independent generators $g_1 , g_2 , \dots $ as in Lemma \ref{lem-Halfway} with minimal representative loops $\gamma_1 , \gamma_2 , \dots$ (resp.) 

First observation is that $d_k := Length (\gamma_k)$ diverges to infinity since otherwise we would have , for some large $R$,  $\pi_1(X , x_0) = \pi_1 \left( \overline{B(x_0 , R)}  , x_0\right)$ which is finite (since $ \overline{B(x_0 , R)} $ is compact) which is a contradiction.

Let $\{x_k\}$ be a sequence with $x_k \in \partial B(x_0 , \left(\frac{1}{2} + S_N  \right)d_k)$ and let $\beta_k : I \rightarrow X$ be the minimal geodesic from $x_k$ to $x_0$. From the uniform cut lemma (Lemma \ref{lem-cut}) we have 
\be
	d_X \left( x_k , \gamma_k \left( \frac{d_k}{2} \right)  \right) \ge 3S_N d_k.
\ee

Now take the points $y_k \in \partial B \left( x_0 , \frac{d_k}{2} \right) \cap \beta_k(I)$; then, by the triangle inequality, we get
\be
	d_X \left( y_k , \gamma_k \left( \frac{d_k}{2} \right)   \right) \ge 2S_N d_k
\ee
hence,
\be
	\limsup \frac{\diam \partial \left( B (p , r) \right)}{r} \ge \limsup \frac{ d_X \left( y_k , \gamma_k \left( \frac{d_k}{2} \right)   \right)}{\frac{d_k}{2}} \ge 4 S_N
\ee
which is a contradiction.

\textbf{\textit{QED.}}

\subsection*{Proof of Corollary \ref{cor-main2}}
Let $X$ be a metric space with non-negative curvature in the sense of Alexandrov and with small linear diameter growth. 
It is well-known that $X$ is non-branching (for example see~\cite{BBI}). From Petrunin~\cite{PetruninCD-KN}, we know that $X$ is a $CD(0,N)$ space and infinitesimal Hilbertianity follows from Kuwae-Machigashira-Shioya~\cite{KMS-Alex-RCD}. 
The local (Lipschitz) contractibility of $X$ follows from Mitsuishi-Yamagichi~\cite{Mitsuishi-Yamagichi-SLLC} and Perelman~\cite{PerelmanAlexII}. So, $X$ satisfies all the hypotheses of Theorem \ref{thm-main}.

\textbf{\textit{QED.}}






\bibliographystyle{plain}
\bibliography{reference2014}

\end{document}